\font\tenscr=rsfs10 % scaled \magstep1
\font\sevenscr=rsfs7 % scaled \magstep1
\font\fivescr=rsfs5 % scaled \magstep1
\def\scr{\fam\scrfam}
\newtheorem{theorem}{Theorem}[section]
\newtheorem{proposition}[theorem]{Proposition}
\theoremstyle{definition}
\theoremstyle{remark}
\newtheorem{remark}[theorem]{Remark}
\numberwithin{equation}{section}
\newcommand{\E}{\mathbb{E}}
\renewcommand{\O}{\mathcal{O}}
\newcommand{\R}{\mathbb{R}}
\renewcommand{\S}{\mathcal{S}}
\newcommand{\U}{\mathcal{U}}
\newcommand{\W}{\mathcal{W}}
\newcommand{\Z}{\mathbb{Z}}
\newcommand{\cd}{\,\cdot\,}
\begin{document}

\title[On a flow of tranformations of a Wiener space ]
{On a flow of transformations of a Wiener space}
\author{J. Najnudel, D. Stroock, M. Yor
}
\date{\today}
\maketitle
\begin{abstract}
In this paper, we define, via Fourier transform, an ergodic flow of transformations of a Wiener space which 
preserves the law of the Ornstein-Uhlenbeck process and which interpolates 
the iterations of a transformation previously defined by Jeulin and Yor. Then, we give a more 
explicit expression for this flow, and we construct from it a continuous gaussian 
process indexed by $\mathbb{R}^2$, such that all its restriction obtained by 
fixing the first coordinate are Ornstein-Uhlenbeck processes. 
\end{abstract}
\begin{section}{Introduction}\end{section}

An abstract Wiener space is a triple $(H,E,\mathcal W)$ consisting of a
separable, real Hilbert space $H$, a separable real Banach space $E$ in
which $H$ is continuously embedded as a dense subspace, and a Borel probability measure
$\mathcal W$ on $E$ with the property that, for each $x^*\in E^*$, the
$\W$-distribution of the map $x \in E\longmapsto
\langle x,x^*\rangle \in\R$, from $E$ to $\mathbb{R}$, is a
centered gaussian random distribution
with variance $\|h_{x^*}\|_H^2$, where $h_{x^*}$ is the element of $H$
determined by $(h,h_{x^*})_H=\langle h,x^*\rangle$ for all $h\in H$.  See
Chapter 8 of \cite{St1} for more information on this topic.

Because $\{h_{x^*}:\,x^*\in E^*\}$ is dense in $H$ and $\|h_{x^*}\|_H=
\|\langle\,\cdot\,,x^*\rangle\|_{L^2(\mathcal W)}$, there is a unique
isometry, known as the Paley--Wiener map,
$\mathcal I:H\longmapsto L^2(\mathcal W)$ such that $\mathcal
I(h)=\langle \,\cdot\,,x^*\rangle$ if $h=h_{x^*}$.  In fact, for each
$h\in H$, $\mathcal I(h)$ under $\mathcal W$ is a centered Gaussian variable with
variance $\|h\|_H^2$.  Because, when $h=h_{x^*}$, $\mathcal I(h)$ provides an
extention of $(\,\cdot\,,h)_H$ to $E$, for intuitive purposes one can think
of $x\rightsquigarrow[\mathcal I(h)](x)$ as a giving meaning to the inner
product $x\rightsquigarrow (x,h)_H$, although for general $h$ this will be
defined only up to a set of $\mathcal W$-measure $0$.

An important property of abstract Wiener spaces is that they are invariant
under orthogonal transformations on $H$.  To be precise, given an
orthogonal transformation $\mathcal O$ on $H$, there is a $\mathcal
W$-almost surely unique $T_{\mathcal O}:E\longrightarrow E$ with the
property that, for each $h\in H$, $\mathcal I(h)\circ T_{\mathcal
  O}=\mathcal I(\mathcal O^\top 
h)$ $\mathcal W$-almost surely.  Notice that this is the relation which
one would predict if one thinks of $[\mathcal I(h)](x)$ as the inner
product of $x$ with $h$.  In general, $T_\O$ can be constructed by choosing
$\{x_m^*:\,m\ge1\}\subseteq E^*$ so the $\{h_{x^*_m}:\,m\ge1\}$ is an
orthonormal basis in $H$ and then taking
$$T_\O x=\sum_{m=1}^\infty \langle x,x_m^*\rangle \O h_{x_m^*},$$
where the series converges in $E$ for $\W$-almost every $x$ as well as in
$L^p(\W;E)$ for every $p\in[1,\infty )$.  See Theorem 8.3.14 in \cite{St1}
  for details.  In the case when $\O$ admits an
extension as a continuous map on $E$ into itself, $T_{\mathcal O}$ can
be the taken equal to that extension.
In any case, it is an easy matter to check that the measure 
$\mathcal{W}$ is preserved by $T_{\mathcal O}$.  Less obvious is a theorem, originally
formulated by I.M. Segal 
(cf.\ \cite{St2}), which says that $T_{\mathcal O}$ is ergodic if and only
$\mathcal O$ admits no non-trivial, finite dimensional, invariant
subspace.  Equivalently, $T_{\mathcal O}$ is ergodic if and only if the
complexification $\mathcal O_{\rm c}$ has a continuous spectrum as a
unitary operator on the complexification $H_{\rm c}$ of $H$.

The classical Wiener space provides a rich source of examples to which the
preceding applies.  Namely, take $H=H^1_0$ to be
  the space of absolutely continuous $h\in\Theta $ whose derivative $\dot
  h$ is in $L^2([0,\infty ))$, and set $\|h\|_{H^1_0}=\|\dot
    h\|_{L^2([0,\infty ))}$.  Then $H^1_0$ with norm
      $\|\,\cdot\,\|_{H^1_0}$ is a separable Hilbert space.  
Next, take $E=\Theta $, where $\Theta $ is the space
of continuous paths $\theta :[0,\infty )\longrightarrow \mathbb R$ such that
$\theta (0)=0$ and
$$\frac{|\theta (t)|}{t^{\frac12}\log(e+|\log t|)}\longrightarrow
    0\quad\text{as $t > 0$ tends to $0$ or $\infty$},$$
and set
$$\|\theta \|_\Theta  =\sup_{t>0}\frac{|\theta
  (t)|}{t^{\frac12}\log(e+|\log t|)}.$$
Then $\Theta $ with norm $\|\,\cdot\,\|_\Theta $ is a separable Banach
space in which $H^1_0$ is continuously embedded as a dense subspace.  
Finally, the renowned theorem of Wiener combined with the Brownian law of
the iterated logarithm says that there is a Borel
probability measure $\mathcal W_{H^1_0}$ on $\Theta $ for which
$(H^1_0,\Theta ,\mathcal W_{H^1_0})$ is an abstract Wiener space.  Indeed,
it is the classical Wiener space on which the abstraction is modeled, and
$\mathcal W_{H^1_0}$ is the distribution of an $\mathbb R$-valued Brownian motion.

One of the simplest examples of an orthogonal transformation on $H^1_0$ for
which the associated transformation on $\Theta $ is ergodic is the
Brownian scaling map $S_\alpha $ given by $S_\alpha \theta (t)=\alpha
^{-\frac12}\theta (\alpha t)$ for $\alpha >0$.  It is an easy matter to
check that the restriction ${\mathcal O}_\alpha $ of $S_\alpha $ to $H^1_0$
is orthogonal, and so, since $S_\alpha $ is
continuous on $\Theta $, we can take $T_{\mathcal O_\alpha }=S_\alpha $.
Furthermore, as long as $\alpha \neq1$, an elementary computation 
shows that $\lim_{n\to\infty }\bigl(g,\mathcal O^n_\alpha h\bigr)_H=0$,
first for smooth $g,\,h\in H^1_0$ with compact support in $(0,\infty )$ and
thence for all $g,\,h\in H^1_0$.  Hence, when $\alpha \neq1$, $\mathcal
O_\alpha $ admits no 
non-trivial, finite dimensional subspace, and therefore $S_\alpha $ is
ergodic; and so, by the Birkoff's Individual Ergodic Theorem, for
$p\in[1,\infty )$ and $f\in
L^p(\mathcal W_{H^1_0})$,
$$\lim_{n\to\infty }\frac1n\sum_{m=0}^{n-1}f\circ S_\alpha ^n=\int
f\,d\mathcal W_{H^1_0}$$
both $\mathcal W_{H^1_0}$-almost surely and in $L^p(\mathcal W_{H^1_0})$.
Moreover, since $\{S_\alpha :\,\alpha \in(0,\infty )\}$ is a
multiplicative semigroup in the sense that $S_{\alpha \beta }=S_\alpha
\circ S_\beta $, one has the continuous parameter version
$$\lim_{a\to\infty }\frac1{\log a}\int_1^a (f\circ S_\alpha) \,\frac{d\alpha}{\alpha} =\int
f\,d\mathcal W_{H^1_0}$$
of the preceding result.

A more challenging ergodic transformation of the classical Wiener space was
studied by Jeulin and Yor (see \cite{JY}, \cite{M} and \cite{Y}),  
and, in the framework of this article, it is obtained by
considering the transformation $\mathcal
O$ on $H^1_0$, defined by
\begin{equation}\label{JY}[\mathcal Oh](t)=h(t)-\int_0^t\frac{h(s)}s\,ds.\end{equation}
An elementary calculation shows that $\mathcal O$ is orthogonal.  Moreover,
$\mathcal O$ admits a continuous extension to $\Theta $ given by replacing
$h\in H^1_0$ in (\ref{JY}) by $\theta \in\Theta $.  That is
\begin{equation}\label{EJY}
[T_{\mathcal O}\theta ]=\theta (t)-\int_0^t\frac{\theta (s)}s\,ds\quad\text{for
  $\theta\in\Theta$ and $t\ge0$}.\end{equation} 
In addition, one can check that $\lim_{n\to\infty
}\bigl(g,\mathcal O^nh\bigr)_{H^1_0}=0$ for all $g,\,h\in H^1_0$, which
proves that $T_{\mathcal O}$ is ergodic  for $\mathcal W_{H^1_0}$.   

In order to study the transformation $T_{\mathcal O}$ in greater detail, it
will be convenient to reformulate it in terms of the Ornstein--Uhlenbeck
process.  That is, take
$H^U$ to be the space of absolutely continuous functions
$h:\R\longrightarrow \R$ such that
$$\|h\|_{H^U}\equiv\sqrt{\int_{\mathbb R}\bigl(\tfrac14 h(t)^2+\dot
  h(t)^2\bigr)\,dt}<\infty .$$
Then $H^U$ becomes a separable Hilbert space with norm $\|\cd\|_{H^U}$.
Moreover, the map $F:H^1_0\longrightarrow H^U$ given by
\begin{equation}\label{F}[F(g)](t)=e^{-\frac t2}g(e^t),\quad\text{for }
  g\in H^1_0\text{ and }t\in\R,\end{equation}
is an isometric surjection which extends as an isometry from $\Theta $ onto Banach
space $\mathcal U$ of continuous $\omega:\longrightarrow \R$ satisfying $\lim_{|t|\to\infty
}\frac{|\omega(t)|}{\log |t|}=0$ with norm $\|\omega\|_{\mathcal U}=\sup_{t\in\mathbb
  R}\bigl(\log(e+|t|)\bigr)^{-1}|\omega(t)|$.  Thus, 
$(H^U,\mathcal U,\mathcal W_{H^U})$ is an abstract Wiener space, where 
$\mathcal W_{H^U}=F_*\mathcal W_{H^1_0}$ is the image of $\mathcal W_{H^1_0}$ under the map 
$F$.  In fact,  $\mathcal W_{H^U}$ is the distribution of 
a standard, reverisible Ornstein--Uhlenbeck process.

Note that the scaling transformations  for the classical Wiener space
become translations in the Ornstein--Uhlenbeck setting.  Namely,
for each $\alpha > 0$, $F\circ S_\alpha=\tau_{\log\alpha }\circ F$,
where 
$\tau _s$ denotes the time-translation map given by $[\tau _s \omega](t)=\omega(s+t)$.
Thus, for $s\neq 0$, the results proved about the scaling maps say that
$\tau _s$ is an ergodic transformation for $\mathcal W_{H^U}$.  In
particular, for $p\in [1,\infty )$ and $f\in L^p(\mathcal W_{H^U})$,
$$\lim_{n\to\infty }\frac1n\sum_{m=0}^{n-1}f\circ\tau _{ns}=\lim_{T\to\infty }\frac1T
\int_0^T f\circ\tau _s\,ds=\int f\,d\mathcal W_{H^U}$$
both $\mathcal W_{H^U}$-almost surely and in $L^p(\mathcal W_{H^U})$.

The main goal of this article is to show that the reformulation of transformation
$T_{\mathcal   O}$ coming from the Jeulin--Yor transformation
in terms of the Ornstein--Uhlenbeck process allows us to embed $T_\O$ in a
continuous-time flow of transformations on the space $\mathcal{U}$, each of
which is $\mathcal W_{H^1_0}$-measure preserving and all but one of which
is ergodic. In Section \ref{2}, this flow
is described via Fourier transforms. In Section
\ref{3}, a direct and more explicit expression, involving
hypergeometric functions and principal values, is computed. In Section
\ref{4}, we study the two-parameter gaussian process which is induced by the flow
introduced in Section \ref{2}. In particular, we compute its covariance and
prove that it admits a version which is jointly continuous in its
parameters.  

\medbreak
\begin{section}{Preliminary description of the flow} \label{2} \end{section}

Let $\mathcal O$ and $T_\O$ be the transformations on $H^1_0$ and $\Theta $
given by (\ref{JY}) and (\ref{EJY}), and recall the unitary map
$F:H^1_0\longrightarrow H^U$ in (\ref{F}) and its continuous extension as
an isometry from $\Theta $ onto $\U$.  Clearly, the inverse of $F$ is given by 
$$F^{-1}(\omega) (t) = \sqrt{t} \, \omega(\log t)\quad\text{ for }t>0.$$

Because $F$ is unitary and $\O$ is orthogonal on $H^1_0$, $-F\circ \O\circ
F^{-1}$ is an orthognal transformation on $H^U$, and because
$$S:= - F \circ T_{\mathcal O} \circ F^{-1}$$
is continuous extension of $-F\circ \O\circ F^{-1}$ to $\U$, we can
identify $S$ as $T_{-F\circ \O\circ F^{-1}}$.  

Another expression for action of $S$ is
$$[S(\omega )](t)=-\omega (t)+\int_0^\infty e^{-\frac s2}\omega
(t-s)\,ds\quad\text{for } t\in\R.$$
Equivalently, 
$$S(\omega) = \omega * \mu,$$
where $\mu $ is the finite, signed measure $\mu$ given by
$$\mu := - \delta_0 + e^{-\frac t2} \mathds{1}_{t \geq 0} dt.$$
To confirm that $\omega *\mu $ is well-defined as a Lebesgue integral and
that it maps $\U$ continuously into itself, 
note that, for any $\omega \in\U$ and $t\in\R$,
\begin{align*}\int_0^\infty e^{-\frac s2}|\omega (t-s)|\,ds&\le\|\omega \|_\U
\int_0^\infty e^{-\frac s2}\log\bigl(e+|t|+s)\,ds\\&\le
\|\omega \|_\U\log(e+|t|)\int_0^\infty e^{-\frac s2}(1+s)\,ds\le 9\|\omega \|_\U\log(e+|t|) 
\end{align*}

The Fourier transform $\widehat \mu $ of $\mu $ is given by
$$ \widehat{\mu} (\lambda)  = \int_{\mathbb{R}} e^{-i \lambda x} d\mu(x) 
 = - 1 + \int_0^{\infty} e^{- x (1/2 + i \lambda)} dx 
 = - 1 + \frac{1}{1/2 + i \lambda} = \frac{1 - 2i \lambda}{1 + 2 i \lambda}
= e^{- 2 i \operatorname{Arctg} (2 \lambda)}. $$
Hence, for all $h \in H^U$ and $\lambda \in \mathbb{R}$, 
\begin{equation}
\label{FS}  
\widehat{h * \mu} (\lambda)= e^{- 2 i \operatorname{Arctg} (2
  \lambda)}\widehat{h}(\lambda),\end{equation} 
which, since
$$\|h\|_{H^U}^2=\frac1{8\pi }\int_\R|\widehat h(\lambda )|^2\bigl(1+4\lambda
^2\bigr)\,d\lambda ,$$
provides another proof that $S\restriction H^U$ is isometric.  

The preceeding, and especially (\ref{FS}), suggests a natural way to embed
$S\restriction H^U$ into a continuous group of orthogonal
transformations.  Namely, for $u\in\R$, let
$\mu ^{*u}$ to be the unique tempered distribution whose Fourier transform
is given by
\begin{equation}\label{alpha}\widehat{\mu^{*u}} (\lambda) = e^{- 2 i u
  \operatorname{Arctg} (2 \lambda)},\end{equation}
and define $\S^u\varphi =\varphi
*\mu ^{*u}$ for $\varphi $ in the Schwartz test function class $\scr S$ of smooth
functions which, together with all their derivatives, are rapidly
decreasing.  Because $$\widehat{\S^u\varphi }(\lambda )=e^{- 2 i u
  \operatorname{Arctg} (2 \lambda)}\hat\varphi (\lambda ),$$ it is obvious
that $\S^u$ has a unique extension as an orthogonal transformation on
$H^U$, which we will again denote by $\S^u$.  Furthermore, it is clear that
$\S^{u+v}=\S^u\circ \S^v$ for all $u,v\in\R$.  Finally, for all $g,h \in
H^U$, $u \in \mathbb{R}$,
\begin{align*} (g, \mathcal{S}^u h)_{H^U} & =
  \frac{1}{8\pi} \int_{\mathbb{R}} \overline{\widehat{g}(\lambda)} \,
  \widehat{h}(\lambda) \, e^{- 2 i u \operatorname{Arctg} (2 \lambda)} (1 +
  4 \lambda^2) \, d\lambda \\ & = \frac{1}{16 \pi} \int_{- \pi/2}^{\pi/2}
  \overline{\widehat{g}\left( \frac{\tan(\tau)}{2} \right)} \, \widehat{h}
  \left(\frac{\tan(\tau)}{2} \right)\, \left(1 + \tan^2(\tau) \right)^2
  e^{- 2 i u \tau} \, d\tau, \end{align*} \noindent

where
$$ \frac{1}{16 \pi} \int_{- \pi/2}^{\pi/2} \left| \widehat{g}\left(
\frac{\tan(\tau)}{2} \right) \right| \, \left| \widehat{h}
\left(\frac{\tan(\tau)}{2} \right) \right| \left(1 + \tan^2(\tau) \right)^2
\, d\tau = \frac{1}{8\pi} \int_{\mathbb{R}} | \widehat{g}(\lambda) | \, |
\widehat{h}(\lambda) | \, (1 + 4 \lambda^2) \, d\lambda$$ $$ \leq
\frac{1}{8\pi} \, \left( \int_{\mathbb{R}} | \widehat{g}(\lambda) |^2 (1 +
4 \lambda^2) d \lambda \right)^{1/2} \, \left( \int_{\mathbb{R}} |
\widehat{h}(\lambda) |^2 (1 + 4 \lambda^2) d \lambda \right)^{1/2}=
||g||_{H^U} ||h||_{H^U} < \infty.$$
Hence, by Riemann--Lebesgue lemma, shows that $(g, \mathcal{S}^u h)_{H^U}$ tends to
zero when $|u|$ goes to infinity.

Now define the associated transformations $S^u:=T_{\S^u}$ on $\U$ for each $u\in\R$.
By the general theory summarized in the introduction and the
preceding discussion, we know that $\{S^u:\,u\in\R\}$ is
a flow of $\W_{H^U}$-measure preserving transformations and that, for each
$u\neq0$, $S^u$ is ergodic.  

\medbreak
\begin{section}{A more explicit expression} \label{3} \end{section}

So far we know very little about the transformations $S^u$ for general
$u\in\R$.  By getting a handle on the tempered distributions $\mu ^{*u}$,
in this section we will attempt to find out a little more.

We begin with the case when $u$ is an integer $n\in\Z$.  Recalling that
$\mu =-\delta _0+e^{-\frac t2}{\mathds 1}_{t\ge 0}\,dt$, one can use
induction to check that, for $n\ge0$,
$$\mu^{*n} = (-1)^n \bigl(  \delta_0 + e^{-\frac t2} L'_n(t) \mathds{1}_{t
  \geq 0} dt\bigr),$$
where $L_n$ is the $n$th Laguerre polynomial. Indeed, the 
Laguerre polynomials satisfy the following relations: for all $n \geq 0$,
$$L_n(0) = 1$$  and for all $n \geq 0$, $t \in \mathbb{R}$,   
$$L'_{n+1}(t) = L'_n(t) - L_n(t).$$
 Similarly, starting from
$\mu ^{*-1}=-\delta _0+e^{\frac t2}{\mathds1}_{t\ge0}\,dt$, one finds that
$$\mu^{*n} = (-1)^n \bigl(  \delta_0 + e^{\frac t2} L'_n(-t) \mathds{1}_{t
  \leq 0} dt\bigr)$$ 
for $n\le0$.  In particular, $\mu ^{*n}$ is a finite, signed measure 
for $n\in{\Z}$ and $S^n\omega $ can be identified as $\mu ^{*n}*\omega $ for
all $\omega \in\U$ and $n\in\Z$.

As the next result shows, when $u\notin\Z$, $\mu ^{*u}$ is more singular
tempered distribution than a finite, signed measure.  
\newline
\begin{proposition}  \label{propo}
For each $u \notin \mathbb{Z}$, the distribution $\mu^{*u}$ is given by the
following formula: 
\begin{equation}
\mu^{*u} = \cos(\pi u) \delta_0(x) + \frac{\sin(\pi u)}{\pi} pv(1/x) +
\Phi_u(x), \label{y6} \end{equation}
where $pv$ denotes the principal value, and $\Phi_u\in L^2(\R)$ is 
 the function for which $\Phi _u(x)$ equals 
\begin{equation}\begin{aligned} e^{-|x|/2}  & \left( - \frac{u \, \sin(\pi
    u)}{\pi} \sum_{k = 0}^{\infty} 
\frac{(1-u \operatorname{sgn} (x))_k |x|^k}{k!(k+1)!} \left[
\frac{\Gamma'}{\Gamma} (1+k - u \operatorname{sgn} (x)) -
\frac{\Gamma'}{\Gamma} (1+k) \right. \right. \nonumber \\  
&\hskip2.7truein \left. \left. \;  \;  - 
\frac{\Gamma'}{\Gamma}(2+k) + \log (|x|) \right] + \frac{\sin(\pi u)}{\pi
  x} \right) - \frac{\sin \pi u}{\pi x}, \label{y7} \end{aligned}\end{equation}
 $\Gamma'/\Gamma$ being the logarithmic derivative of the Euler gamma function
and $(\; \;)_k$ being the Pochhammer symbol. 
\noindent
\end{proposition}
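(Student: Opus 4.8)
The plan is to invert the Fourier transform, that is, to find the tempered distribution whose Fourier transform is the symbol $\lambda\mapsto e^{-2iu\operatorname{Arctg}(2\lambda)}$. The first thing to notice is that this symbol does not decay: since $\operatorname{Arctg}(2\lambda)\to\pm\pi/2$ as $\lambda\to\pm\infty$, it tends to $e^{\mp i\pi u}=\cos(\pi u)\mp i\sin(\pi u)$. I would therefore peel off a singular distribution carrying exactly this behaviour at infinity. In the convention $\widehat f(\lambda)=\int e^{-i\lambda x}f$, one has $\widehat{\delta_0}=1$ and $\widehat{pv(1/x)}(\lambda)=-i\pi\operatorname{sgn}(\lambda)$, so the distribution $\cos(\pi u)\delta_0+\tfrac{\sin(\pi u)}{\pi}pv(1/x)$ has Fourier transform $\cos(\pi u)-i\sin(\pi u)\operatorname{sgn}(\lambda)$, which matches the two limits of the symbol. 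I accordingly set
$$\psi_u(\lambda):=e^{-2iu\operatorname{Arctg}(2\lambda)}-\cos(\pi u)+i\sin(\pi u)\operatorname{sgn}(\lambda),$$
so that $\mu^{*u}-\cos(\pi u)\delta_0-\tfrac{\sin(\pi u)}{\pi}pv(1/x)$ is the distribution with Fourier transform $\psi_u$.

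A one-term expansion of $\operatorname{Arctg}(2\lambda)$ near $\pm\infty$ gives $\psi_u(\lambda)=O(1/\lambda)$, while $\psi_u$ is smooth and bounded; hence $\psi_u\in L^2(\mathbb R)$. By Plancherel's theorem its inverse Fourier transform is an honest $L^2$ function, which I call $\Phi_u$. This already establishes the decomposition (\ref{y6}) and the assertion $\Phi_u\in L^2(\mathbb R)$, independently of any explicit formula.

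It then remains to identify $\Phi_u(x)$ for $x\neq 0$. Using the factorization $e^{-2iu\operatorname{Arctg}(2\lambda)}=(\tfrac12-i\lambda)^u(\tfrac12+i\lambda)^{-u}$, whose only branch point in the upper half-plane is at $\lambda=i/2$, I would compute, for $x>0$, the density of $\mu^{*u}$ by deforming the inversion contour onto the cut $\{\lambda=i\eta:\eta>1/2\}$. The jump of $(\tfrac12+i\lambda)^{-u}$ across the cut is $-2i\sin(\pi u)(\eta-\tfrac12)^{-u}$, and after the substitution $\eta=\tfrac12+t$ one is left with
$$\frac{\sin(\pi u)}{\pi}\,e^{-x/2}\int_0^\infty e^{-tx}(1+t)^{u}\,t^{-u}\,dt=\frac{\sin(\pi u)}{\pi}\,e^{-x/2}\,\Gamma(1-u)\,U(1-u,2,x),$$
where $U$ is Tricomi's confluent hypergeometric function (the integral converges for $\operatorname{Re}u<1$ and is continued analytically in $u$ otherwise). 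Since $pv(1/x)$ coincides with $1/x$ away from the origin, $\Phi_u(x)$ on $\{x>0\}$ equals this density minus $\tfrac{\sin(\pi u)}{\pi x}$.

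Finally I would insert the classical logarithmic expansion of $U(a,2,z)$, valid in the degenerate case of integer second parameter: with $a=1-u$ it produces the power series in $x^k$ whose coefficients carry $\log x+\tfrac{\Gamma'}{\Gamma}(1+k-u)-\tfrac{\Gamma'}{\Gamma}(1+k)-\tfrac{\Gamma'}{\Gamma}(2+k)$, together with an explicit $x^{-1}$ term $1/\Gamma(1-u)x$; using $\Gamma(1-u)/\Gamma(-u)=-u$ one recovers exactly (\ref{y7}) for $x>0$. The case $x<0$ then follows from the symmetry $\widehat{\mu^{*u}}(-\lambda)=\widehat{\mu^{*(-u)}}(\lambda)$, i.e.\ $\mu^{*u}(-\,\cdot\,)=\mu^{*(-u)}$, which is precisely what the factor $\operatorname{sgn}(x)$ in (\ref{y7}) encodes. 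I expect the main obstacle to be this third step: justifying the contour deformation for a conditionally convergent inversion integral (the $\operatorname{sgn}$-term in $\psi_u$ forbids a naive shift of contour, so the branch-cut extraction must be carried out on the regularized integral), and then matching every constant and sign against the logarithmic hypergeometric expansion.
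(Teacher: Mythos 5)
Your proposal is correct in its main claims and takes a genuinely different route from the paper. The paper's proof is a \emph{verification}, not a derivation: it takes the right-hand side of \eqref{y6} as a candidate $\nu_u$, invokes Lebedev (9.10.6) only to assert that $\psi_u$ solves the confluent hypergeometric equation $x \psi_u'' + (2-|x|)\psi_u' + (u-\operatorname{sgn}(x))\psi_u = 0$, then checks by careful distributional bookkeeping (the $\delta_0'$, $fp(1/x^2)$ and $pv(1/x)$ terms all cancel) that $\nu_u$ solves $x\nu_u'' + 2\nu_u' + (u-\tfrac{x}{4})\nu_u = 0$ in the sense of tempered distributions; taking Fourier transforms turns this into the first-order equation $\widehat{\nu_u}'(\lambda)(1+4\lambda^2) = -4iu\,\widehat{\nu_u}(\lambda)$, whose solutions are $c\,e^{-2iu\operatorname{Arctg}(2\lambda)}$, and the constant $c=1$ is fixed by exactly the $L^2$ observation you place at the start (the symbol minus $e^{-i\pi u\operatorname{sgn}(\lambda)}$ lies in $L^2$, while $\delta_0$ and $pv(1/x)$ are not locally integrable). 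Your route instead \emph{derives} the formula: peel off the singular part from the limits of the symbol at $\pm\infty$, compute the density for $x>0$ by deformation onto the branch cut at $\lambda = i/2$, recognize $\tfrac{\sin(\pi u)}{\pi}e^{-x/2}\Gamma(1-u)U(1-u,2,x)$, and expand by the logarithmic series for $U(a,2,z)$ --- which is the very same Lebedev formula (9.10.6), and your identification is consistent with the paper's $\psi_u$ since $\Gamma(1-u)/\Gamma(-u) = -u$ produces the prefactor $-\tfrac{u\sin(\pi u)}{\pi}$ and the term $\tfrac{\sin(\pi u)}{\pi x}$ in \eqref{y7}; the symmetry $(u,x)\mapsto(-u,-x)$ for $x<0$ is also correct. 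What each approach buys: yours explains where the formula comes from and avoids the delicate distributional ODE computation; the paper's avoids all contour and regularization issues at the price of having to know the answer in advance. The one real gap is the one you flag yourself: the inversion integral does not converge and the $\operatorname{sgn}$-term in your $\psi_u$ is not analytic, so the cut extraction needs a regularization. A clean fix is to multiply by $x$: with the paper's convention, $\widehat{x\mu^{*u}}(\lambda) = i\tfrac{d}{d\lambda}\widehat{\mu^{*u}}(\lambda) = \tfrac{4u}{1+4\lambda^2}\,e^{-2iu\operatorname{Arctg}(2\lambda)}$ is absolutely integrable, the contour deformation is then legitimate for $x\mu^{*u}$, and dividing by $x$ recovers the density away from the origin; note that this relation is precisely the Fourier-transformed ODE on which the paper's proof runs, so the two arguments are dual to one another. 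You should also make the analytic continuation in $u$ explicit: the cut integral converges only for $u<1$, and the identity extends to all $u\in\mathbb{R}\setminus\mathbb{Z}$ because both $u\mapsto\langle\mu^{*u},\varphi\rangle$ and the tested right-hand side of \eqref{y6} are analytic on the connected set $\mathbb{C}\setminus\mathbb{Z}$, which requires locally uniform control of the series in \eqref{y7}.
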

\begin{proof}
Define the functions 
$\psi_u$ and $\theta_u$ from $\mathbb{R}^*=\R\setminus \{0\}$ to
$\mathbb{R}$ so that $\theta _u(x)=e^{-\frac x2}\psi _u(x)$ and $\psi
_u(x)$ equals
\begin{align*}
- \frac{u \, \sin(\pi u)}{\pi} \sum_{k = 0}^{\infty}
\frac{(1-u \operatorname{sgn} (x))_k |x|^k}{k!(k+1)!} &\left[
\frac{\Gamma'}{\Gamma} (1+k - u \operatorname{sgn} (x)) -
\frac{\Gamma'}{\Gamma} (1+k) \right. \\  
 &\left. \hskip10pt  - 
\frac{\Gamma'}{\Gamma}(2+k) + \log (|x|) \right] + \frac{\sin(\pi u)}{\pi x}.
\end{align*}
\noindent
From Lebedev \cite{L}, p. 264, equation (9.10.6), 
with the parameters $\alpha = 1-u$ or $\alpha = 1+u$, $n =1$, $z = x$ or $z
= -x$, the function 
$\psi_u$ satisfies, for all $x \in \mathbb{R}^*$, the differential equation:
$$x \psi_u'' (x) + (2 - |x|) \psi'_u(x) + (u - \operatorname{sgn} (x)) \psi_u(x) = 0,$$
and grows at most polynomially at infinity. One then deduces that
$\theta_u$ decreases as least  
exponentially at infinity, and satisfies (for $x \neq 0$) the following equation:
\begin{equation}
x \theta''_u(x) + 2 \theta'_u(x) + \left( u - \frac{x}{4} \right) \,
\theta_u(x) = 0. \label{ed} \end{equation}
At the same time, by writing 
$$e^{-|x|/2} = (e^{-|x|/2} -1) + 1$$
and expanding $\theta_u(x)$ accordingly, we obtain:
\begin{align*}
\theta_u (x) =  &\frac{\sin(\pi u)}{\pi x} - 
\frac{ u \sin(\pi u)}{\pi} \left[ \frac{\Gamma'}{\Gamma} (1- u \operatorname{sgn} (x)) -
 \frac{\Gamma'}{\Gamma} (1)   - 
\frac{\Gamma'}{\Gamma}(2) + \log (|x|) \right] \\ & \; - \frac{\sin(\pi
  u)}{2 \pi} \operatorname{sgn} (x)  + \eta_u (x),\end{align*}
for
$$\eta_u (x) = x \eta_u^{(1)} (x) + |x| \eta_u^{(2)} (x) + x \log (|x|)
\eta_u^{(3)} (x) + |x| \log(|x|) \eta_u^{(4)} (x),$$ 
where $\eta_u^{(1)}$, $\eta_u^{(2)}$, $\eta_u^{(3)}$, $\eta_u^{(4)}$ are
all smooth functions. 
The derivatives of the functions $x$, $|x|$, $x \log |x|$, $|x| \log |x|$
in the sense of the distributions are obtained by interpreting their
ordinary derivatives as distributions. Similarly, the product by $x$
of their second distributional derivatives are obtained by multiplying 
their ordinary second derivatives by $x$.  Hence, both 
$\eta_u'(x)$ and $x \eta_u''(x)$ as distributions can be obtained by 
computing $\eta_u'(x)$ and  $x \eta_u''(x)$ as functions on $\R^*$.

Now, let $\nu_u$ be the distribution given by the expression:
\begin{equation}
\nu_u (x) = \cos (\pi u) \delta_0 (x) + \frac{\sin(\pi u)}{\pi} pv(1/x) +
\left[\theta_u(x) - \frac{\sin(\pi u)}{\pi x} \right]. \label{w37} 
\end{equation}
Note that the term in brackets, in the definition of $\nu_u$, is a locally
integrable function, and that  
 $\nu_u$ coincides with the function $\theta_u$ in the complement of the
neighborhood of zero.  
Let us now prove that $\nu_u$ satisfies the analog of the equation
\eqref{ed}, in the sense of the distributions.  
One has:
\begin{align*}
\nu_u(x) & = \cos(\pi u) \delta_0(x) + \frac{\sin(\pi u)}{\pi} pv(1/x) -
\frac{ u \sin(\pi u)}{\pi} 
\left[ \frac{\Gamma'}{\Gamma} (1 - u  \operatorname{sgn}(x)) \right. \\ & \; \left. 
- \frac{\Gamma'}{\Gamma} (1) - \frac{\Gamma'}{\Gamma}(2) + \log (|x|)
\right] - \frac{\sin(\pi u)}{2 \pi} 
\operatorname{sgn} (x)  + \eta_u (x). 
\end{align*} 
\noindent
Since 
$$\frac{\Gamma'}{\Gamma} (1+u) - \frac{\Gamma'}{\Gamma} (1-u) = 
\frac{\frac{d}{du} \left( \Gamma(1+u) \Gamma(1-u) \right)}{\Gamma(1+u)
  \Gamma(1-u)}  = \frac{ \frac{d}{du} (\pi u/ \sin (\pi u))}{\pi u /
  \sin(\pi u)} =  
\frac{1}{u} - \pi \cot (\pi u),$$
one obtains, after straightforward computation, 
$$\nu_u(x) = \cos(\pi) \delta_0 (x) + \frac{\sin(\pi u)}{\pi} pv(1/x) -
\frac{u \cos(\pi u)}{2} \operatorname{sgn} (x)  
- \frac{u \sin(\pi u)}{\pi} \log(|x|) + c(u) + \eta_u(x),$$
where $c(u)$ does not depend on $x$. 
One deduces that 
$$\nu_u(x) = \cos(\pi u) \delta_0(x) + \frac{\sin(\pi u)}{\pi} pv(1/x) + \chi_{u,1} (x),$$
where $\chi_{u,1}$ denotes a locally integrable function. 
Moreover, 
$$\nu'_u(x) = \cos (\pi u) \delta'_0(x) - \frac{\sin(\pi u)}{\pi} fp(1/x^2) -
u \cos(\pi u) \delta_0 (x) - \frac{u \sin (\pi u)}{\pi}  pv(1/x) + \eta'_u(x),$$
where $fp(1/x^2)$ denotes the finite part of $1/x^2$, and then 
$$x \nu'_u(x) = - \cos(\pi u) \delta_0(x) - \frac{\sin(\pi u)}{\pi} pv(1/x) 
-  \frac{u \sin (\pi u)}{\pi}  + x \eta'_u (x).$$
By differentiating again, one obtains:
$$\nu_u'(x) + x \nu''_u(x) = 
- \cos(\pi u) \delta_0'(x) + \frac{\sin (\pi u)}{\pi} fp (1/x^2) 
+ \eta'_u (x)  + x \eta''_u(x).$$
Therefore, 
$$
x \nu''_u(x) + 2 \nu'_u (x) +\left(u - \frac{x}{4} \right) \nu_u(x) 
 = \chi_{u,2} (x) + \left( - \cos (\pi u) \delta'_0 (x) + \frac{\sin(\pi u)}{\pi}
fp(1/x^2) \right) $$ $$+ \left( \cos(\pi u) \delta'_0 (x) - \frac{\sin (\pi u)}{\pi} 
fp(1/x^2) - u \cos(\pi u) \delta_0 (x) - \frac{ u \sin (\pi u)}{\pi} pv (1/x) \right)
$$ $$ + u \left(\cos(\pi u) \delta_0 (x) + \frac{ \sin (\pi u)}{\pi} 
pv(1/x) \right) = \chi_{u,2} (x),$$
\noindent
where $\chi_{u,2}$ is a locally integrable function. Since 
$\theta_u$ satisfies \eqref{ed}, $\chi_{u,2}$ is identically zero. 
Hence, $\nu_u$ is a tempered distribution solving the differential equation:
$$x \nu''_u(x) + 2 \nu'_u (x) + \left(u - \frac{x}{4} \right) \nu_u(x)  = 0,$$
or equivalently, 
$$\frac{x}{4} \nu_u(x) - \frac{d^2}{d^2 x}(x \nu_u (x)) - u \nu_u(x) = 0.$$
Multiplying  by $-4i$ and taking the Fourier transform (in the sense of the 
distributions), one deduces:
$$\widehat{\nu_u}' (\lambda) (1+ 4\lambda^2)  = - 4iu \widehat{\nu_u} (\lambda).$$
This linear equation admits a unique solution, up to a multiplicative factor 
$c$:
$$\widehat{\nu_u}(\lambda) = c \exp \left( \int_{0}^{\lambda} \frac{-4iu}{1
  + 4t^2} dt \right) 
= c \exp(- 2 i u  \operatorname{Arctg} (2 \lambda)).$$
Hence, $\nu_u$ is proportional to $\mu^{*u}$. In order to determine the constant 
$c$, let us observe that the distribution $\nu_{u,0}$ given by
$$\nu_{u,0}(x) = \nu_u(x) - c \cos(\pi u) \delta_0(x) - \frac{c \sin(\pi
  u)}{\pi} pv(1/x) $$ 
admits the Fourier transform:
$$\widehat{\nu_{u,0}}(\lambda) = c \, e^{- 2 i u  \operatorname{Arctg} (2
  \lambda)} - c \, e^{-\pi i u \operatorname{sgn} (\lambda)}.$$ 
One deduces that $\widehat{\nu_{u,0}}$ is a function in $L^2$, which
implies that $\nu_{u,0}$ is also a function in $L^2$, and  
then locally integrable. Since the last term in \eqref{w37} is also a
locally integrable function, one deduces that $c =1$,  
and then 
$$\mu^{*u} = \nu_u,$$
which proves Proposition \ref{propo}.
\end{proof}
\noindent

The reasonably explicit expression for $\mu ^{*u}$ found in Proposition
\ref{propo} yields a
reaonably explicit expression for the action of $\mathcal{S}^u$.  Indeed,
only the term $pv (1/x)$ is a source of concern.  However, convolution
with respect of $pv(1/x)$ is, apart from a multiplicative constant, just
the Hilbert transform, whose properties are well-known.  In particular, it
is a translation invariant, bounded map on $L^2(\R)$, and as such it is also a
bounded map on $H^U$.  Thus, we can unambiguously write
$\mathcal{S}^{u} (h) = h * \mu^{*u}$ for all $h \in H^U$.  On the other
hand, the interpretation of $\omega *\mu ^{*u}$ for $\omega \in\U$ needs
some thought.  No doubt, $\omega *\mu ^{*u}$ is well-defined as an element
of ${\scr S\,}'$, the space tempered
distributions, but it is not immediately obvious that it is can be
represented by an element of $\U$ or, if it can, that the element of
$\U$ which represents it can be identified as $S^u\omega $.  In fact, the
best that we should expect is that such statements will be true of
$\W_{H^U}$-almost every $\omega \in\U$.  The following result justifies that
expectation.

\begin{proposition} \label{OU}  
For $\mathcal W_{H^U}$-almost every $\omega \in \mathcal{U}$,
the tempered distribution $\omega * \mu^{*u}$ is represented by an element
of $\U$ which can be can be identified as $S^u\omega $.
\end{proposition}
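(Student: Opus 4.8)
The plan is to exploit the explicit decomposition of $\mu^{*u}$ from Proposition \ref{propo} in two stages: first to show that, for $\W_{H^U}$-almost every $\omega$, the tempered distribution $\omega*\mu^{*u}$ is represented by an element of $\U$, and then to identify that element with $S^u\omega=T_{\S^u}\omega$ by testing against Paley--Wiener functionals. Writing
\begin{equation*}\omega*\mu^{*u}=\cos(\pi u)\,\omega+\tfrac{\sin(\pi u)}{\pi}\,\omega*pv(1/x)+\omega*\Phi_u,\end{equation*}
the first term is trivially in $\U$. The delicate terms are the principal-value (truncated Hilbert transform) term and the $L^2$ term $\omega*\Phi_u$, since both $pv(1/x)$ and $\Phi_u$ decay only like $1/x$, and hence sit exactly on the borderline of non-integrability against the logarithmically growing paths of $\U$.

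The key structural observation is that these two borderline tails cancel. Fix an even cutoff $\chi\in\scr S$ with $\chi\equiv1$ near $0$ and compact support, and split $pv(1/x)=\chi\,pv(1/x)+(1-\chi)/x$ and $\Phi_u=\chi\Phi_u+(1-\chi)\Phi_u$. By \eqref{y7} one has $\Phi_u(x)+\tfrac{\sin(\pi u)}{\pi x}=e^{-|x|/2}\bigl(\cdots\bigr)$, so that $\tfrac{\sin(\pi u)}{\pi}(1-\chi)/x+(1-\chi)\Phi_u=(1-\chi)\,e^{-|x|/2}(\cdots)$ is a smooth, exponentially decaying function. Its convolution with any $\omega\in\U$, like the convolution with the compactly supported bounded function $\chi\Phi_u$, is an absolutely convergent Lebesgue integral; the estimate of Section \ref{2} shows each lands in the logarithmically weighted sup-space, and dominated convergence upgrades this to the $o(\log|t|)$ decay required for membership in $\U$. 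Thus the only genuinely singular contribution is the compactly supported local principal value $\tfrac{\sin(\pi u)}{\pi}\,(\chi\,pv(1/x))*\omega$. Because $\W_{H^U}$-almost every $\omega$ is locally H\"older continuous, writing $\omega(t-s)=\omega(t)+[\omega(t-s)-\omega(t)]$ and using the oddness of the kernel shows this principal value exists for every $t$; its size is controlled by the local H\"older seminorm of $\omega$ over unit windows centred at $t$, which for the stationary Ornstein--Uhlenbeck process is almost surely $o(\log|t|)$ as $|t|\to\infty$. Hence $\omega*\mu^{*u}\in\U$ for $\W_{H^U}$-almost every $\omega$.

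To identify this element with $S^u\omega$, recall that $S^u=T_{\S^u}$ is characterized by $\I(h)\circ S^u=\I(\S^{-u}h)$ $\W_{H^U}$-a.s.\ for each $h\in H^U$, since $\S^u$ is orthogonal and so $(\S^u)^\top=\S^{-u}$. Fix $x^*\in\scr S$, so that $h_{x^*}\in H^U$ and $\I(h_{x^*})$ restricts on $\U$ to the genuine functional $\eta\mapsto\int\eta\,x^*$. Approximate $\mu^{*u}$ by the Schwartz functions $\mu^{*u}_\epsilon$ with $\widehat{\mu^{*u}_\epsilon}(\lambda)=\widehat{\mu^{*u}}(\lambda)\,\zeta(\epsilon\lambda)$ for an even smooth cutoff $\zeta$ equal to $1$ near $0$. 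For each $\epsilon$ the convolution $\omega*\mu^{*u}_\epsilon$ is classical, and Fubini together with $(\mu^{*u}_\epsilon)^\vee=\mu^{*-u}_\epsilon$ gives
\begin{equation*}\I(h_{x^*})(\omega*\mu^{*u}_\epsilon)=\int(\omega*\mu^{*u}_\epsilon)\,x^*=\int\omega\,(\mu^{*-u}_\epsilon*x^*)=\I\bigl(\S^{-u}_\epsilon h_{x^*}\bigr)(\omega),\end{equation*}
where $\S^{-u}_\epsilon h=h*\mu^{*-u}_\epsilon$ and $\mu^{*-u}_\epsilon*x^*\in\scr S$ is a genuine element of $\U^*$. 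As $\epsilon\to0$ the multiplier $\widehat{\mu^{*-u}_\epsilon}$ converges boundedly to $\widehat{\mu^{*-u}}$, so $\S^{-u}_\epsilon h_{x^*}\to\S^{-u}h_{x^*}$ in $H^U$ and the right-hand side converges in $L^2(\W_{H^U})$ to $\I(\S^{-u}h_{x^*})(\omega)=\I(h_{x^*})(S^u\omega)$; the left-hand side converges, for each such $\omega$, to $\I(h_{x^*})(\omega*\mu^{*u})$, since $\mu^{*u}_\epsilon\to\mu^{*u}$ in $\scr S'$ and $\omega*\mu^{*u}$ is represented by an element of $\U$. Passing to the limit yields $\I(h_{x^*})(\omega*\mu^{*u})=\I(h_{x^*})(S^u\omega)$ $\W_{H^U}$-almost surely.

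To finish, choose a countable family $\{x^*_k\}\subset\scr S$ that separates the points of the separable space $\U$; intersecting the corresponding almost-sure identities shows that $\int(\omega*\mu^{*u})\,x^*_k=\int(S^u\omega)\,x^*_k$ for all $k$ simultaneously, $\W_{H^U}$-almost surely, whence $\omega*\mu^{*u}=S^u\omega$ in $\U$. The main obstacle is the existence step, and specifically the local principal value: one must establish that, for almost every $\omega$, the truncated Hilbert transform is defined at every $t$, depends continuously on $t$, and has $o(\log|t|)$ growth. I expect to handle this using the almost sure local H\"older continuity of the Ornstein--Uhlenbeck paths together with a Borell--TIS bound on the suprema of $\omega$, and the exponential decay $\E[\omega(s)\omega(t)]\propto e^{-|t-s|/2}$ of its covariance; the same regularity, applied uniformly in $\epsilon$, controls the $\epsilon\to0$ limit of the left-hand side in the identification step.
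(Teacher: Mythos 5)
Your proposal takes a genuinely different route from the paper, and it is worth comparing the two. The paper's proof never uses Proposition \ref{propo} at all: it chooses an orthonormal basis $\{h_n\}\subset\scr S$ of $H^U$, views suitable $g_n\in\scr S$ as elements of $\U^*$ with $h_n=h_{g_n}$, and invokes the general abstract Wiener space fact recalled in the introduction (the a.s.\ convergent series construction of $T_\O$) to produce a set $B$ with $\W_{H^U}(B)=1$ on which both $\omega=\lim_n\sum_{m\le n}{}_{\scr S}\langle g_m,\omega\rangle_{\scr S'}h_m$ and $S^u\omega=\lim_n\sum_{m\le n}{}_{\scr S}\langle g_m,\omega\rangle_{\scr S'}(h_m*\mu^{*u})$ converge in $\U$. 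For $\omega\in B$, the duality definition ${}_{\scr S}\langle\varphi,\omega*\mu^{*u}\rangle_{\scr S'}={}_{\scr S}\langle\varphi*\mu^{*-u},\omega\rangle_{\scr S'}$ then passes through both series and yields ${}_{\scr S}\langle\varphi,\omega*\mu^{*u}\rangle_{\scr S'}={}_{\scr S}\langle\varphi,S^u\omega\rangle_{\scr S'}$ for every $\varphi\in\scr S$: existence of the $\U$-representative and its identification with $S^u\omega$ come simultaneously, with no kernel analysis and no path regularity whatsoever. Your identification stage (Fourier-side mollification, testing against Schwartz elements of $\U^*$, the relation $\I(h)\circ S^u=\I(\S^{-u}h)$, and a countable separating family) is sound and is essentially a one-test-function-at-a-time version of the same duality computation. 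What your route would buy, if completed, is strictly more than the proposition asserts: a pathwise, explicitly computable formula for $S^u\omega$ as a principal-value convolution, resting on the nice observation that the $1/x$ tails of $pv(1/x)$ and $\Phi_u$ cancel, leaving only a compactly supported singular kernel plus an exponentially decaying remainder.

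That said, as written your proof has a genuine gap, exactly where you place it: the existence step. The assertion that for $\W_{H^U}$-a.e.\ $\omega$ the truncated Hilbert transform $\bigl(\chi\,pv(1/x)\bigr)*\omega$ is defined at every $t$, continuous in $t$, and $o(\log|t|)$ at infinity is the entire analytic content of your approach, and it is only attributed to tools (local H\"older continuity, Borell--TIS, covariance decay) rather than proved; ``I expect to handle this'' is not a proof, even though the claimed lemma is true and those tools do suffice. Note also that the same $o(\log|t|)$ refinement (not just $O(\log|t|)$, which is what the crude estimate of Section \ref{2} gives) must be checked for the exponentially decaying tail term, via the splitting argument you gesture at with dominated convergence. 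Finally, a smaller error: $\chi\Phi_u$ is not a bounded function --- from the expansion in the proof of Proposition \ref{propo}, $\Phi_u(x)=-\tfrac{u\sin(\pi u)}{\pi}\log|x|+O(1)$ near $0$ --- but it is integrable, which is all your convolution argument actually needs.
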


\begin{proof}  Recall that, for $\varphi \in\scr S$, $\varphi *\mu ^{*-u}$ is
  the element of $\scr S$ whose Fourier transform is given by
$$
  \widehat{\varphi * \mu^{*-u}}(\lambda) = \widehat{\varphi}(\lambda) e^{2iu
    \operatorname{Arctg} (2 \lambda)}\quad\text{for all } \lambda \in\R.
$$
Also, if $T\in{\scr S\,}'$, then $T*\mu ^{*u}$ is the tempered distribution
whose action on $\varphi \in\scr S$ is given by
$${}_{\scr S}\langle \varphi ,T*\mu ^{*u}\rangle_{\scr S'}=
{}_{\scr S}\langle \varphi *\mu ^{*-u},T\rangle_{\scr S'}.$$

Now choose an orthonormal basis $\{h_n:\,n\ge1\}$ for $H^U$ all of whose
members are elements of $\scr S$, and, for each $n\ge1$, set $g_n=\frac14
h_n+h_n''$.  Next, think of $g_n$ as the element of $\U^*$ whose action on
$\omega \in\U$ is given by
$${}_\U\langle\omega ,g_n\rangle_{\U^*}={}_{\scr S}\langle g_n,\omega
\rangle_{\scr S'}.$$
It is then an easy matter to check that, in the notation of the
introduction, $h_n=h_{g_n}$.  Hence, if $B$ is the subset of $\omega \in
\U$ for which
$$\omega =\lim_{n\to\infty }\sum_{m=1}^n{}_{\scr S}\langle g_n,\omega
\rangle_{\scr S'}h_n\quad\text{and} \quad
S^u\omega =\lim_{n\to\infty }\sum_{m=1}^n{}_{\scr S}\langle g_n,\omega
\rangle_{\scr S'}h_n*\mu ^{*u},$$
where the convergence is in $\U$, then $\W_{H^U}(B)=1$.

Now let $\omega \in B$.  Then, for each $\varphi \in\scr S$,
\begin{align*}
{}_{\scr S}\langle \varphi ,\omega *\mu ^{*u}\rangle_{\scr S'}&=
{}_{\scr S}\langle\varphi *\mu ^{*-u},\omega \rangle_{\scr S'}=
\lim_{n\to\infty }\sum_{m=1}^n{}_{\scr S}\langle g_n,\omega \rangle_{\scr S'}
{}_{\scr S}\langle \varphi ,h_n*\mu ^{*u}\rangle_{\scr S'}\\&=
\lim_{n\to\infty }\sum_{m=1}^n{}_{\scr S}\langle g_n,\omega \rangle_{\scr S'}
{}_{\scr S}\langle \varphi ,\S^uh_n\rangle_{\scr S'}={}_{\scr S}\langle
\varphi ,S^u\omega \rangle_{\scr S'}.\end{align*}
Thus, for $\omega \in B$, $\omega *\mu ^{*u}\in{\scr S\,}'$ is represented
by $S^u\omega \in\U$. 
\end{proof}

\medbreak
\begin{section}{A two parameter gaussian process} \label{4} \end{section}

By construction, $\{S^u\omega
(t):\,(u,t)\in\R^2\}$ is a gaussian family in $L^2(\W_{H^U})$.  In this
concluding section, we will show that this family admits a modification
which is jointly continuous in $(u,t)$.

Let $\varphi ,\psi \in\scr S$ and $u,v\in\R^2$ be given.
Then, by Proposition \ref{OU}, for $\W_{H^U}$-almost every $\omega \in\U$,
$$\iint\limits_{\R^2} \varphi(s) \psi(t) (S^u(\omega)) (s) (S^v(\omega)) (t)\,dsdt = 
{}_{\scr S}\langle\varphi ,\omega * \mu^{*u}\rangle_{\scr S'}{}_{\scr
  S}\langle\psi ,\omega * \mu^{*v}\rangle_{\scr S'},$$ 
where the integral in the left-hand side is absolutely convergent.
Because $\E_{\W_{H^U}}\bigl[S^u\omega (t)^2\bigr]$ is finite and independent of
$(u,t)\in\R^2$, by taking the expectation with respect to $\mathcal{W}_{H^U}$ and
using \eqref{alpha}, one can pass from this to
\begin{align*}
&\iint\limits_{\R^2} \varphi(s) \psi(t) \E_{\mathcal{W}_{H^U}}\bigl[(S^u(\omega)) (s) (S^v(\omega)) (t)\bigr]
\,dsdt =\E_{\mathcal{W}_{H^U}}\Bigl[ 
{}_{\scr S}\langle\varphi ,\omega * \mu^{*u}\rangle_{\scr S'}{}_{\scr
  S}\langle\psi ,\omega * \mu^{*v}\rangle_{\scr S'}\Bigr]\\&=
\frac{2}{\pi} \int_{-\infty}^{\infty} 
\frac{e^{2i (u-v) \operatorname{Arctg} (2\lambda)}}{1+4\lambda^2} \, \widehat{\varphi} (\lambda)
 \, \overline{\widehat{\psi}(\lambda)} d \lambda  
= \frac{2}{\pi}
 \iiint\limits_{\R^3}\frac{e^{i  [(t-s)\lambda + 2 (u-v)
       \operatorname{Arctg} (2\lambda)]}}{1+4\lambda^2} \,\varphi(s) \psi(t)
 \,ds dt d \lambda.\end{align*}
Hence, 
\begin{equation}\label{cov}
\mathbb{E}_{\mathcal{W}_{H^U}} [S^u(\omega)) (s) (S^v(\omega)) (t) ]  = 
\frac{2}{\pi} \int_{-\infty}^{\infty} 
\frac{e^{i  [(t-s)\lambda + 2 (u-v) \operatorname{Arctg}
      (2\lambda)]}}{1+4\lambda^2} \, d \lambda,
\end{equation}
first for almost every and then, by continuity,
for all $(s,t) \in \mathbb{R}^2$.
In particular, we now know that the $\W_{H^U}$-distribution of 
$\{S^u(\omega))(t):\,(u,t)\in\R^2\}$ is stationary.

To show that there is a continuous version of this process, we will use
Kolmogorov's continuity criterion, which, because it is stationary and
gaussian, comes down to showing that 
$$\left| 1 - \mathbb{E}_{\W_{H^U}}[(S^u(\omega))(s)(S^v(\omega))(t)] \right| 
\le C\bigl|(u,s)-(v,t)\bigr|^\alpha $$
for some $C < \infty$ and $\alpha >0$.  But
\begin{align*}
&\left| 1 - \mathbb{E}_{\W_{H^U}}[(S^u(\omega))(s)(S^v(\omega))(t)] \right| 
\leq \frac{2}{\pi} \, \int_{-\infty}^{\infty} \frac{d \lambda}{1 + 4 \lambda^2} 
\, \left|e^{i [(t-s)\lambda +  2(u-v) \operatorname{Arctg} (2\lambda)]} - 1 \right|
\\ & \leq \frac{2}{\pi} \, \int_{-\infty}^{\infty} \frac{d \lambda}{1 + 4 \lambda^2} 
\, \left|e^{i (t-s)\lambda } - 1 \right|  + \frac{2}{\pi} \, \int_{-\infty}^{\infty}
 \frac{d \lambda}{1 + 4 \lambda^2} 
\, \left|e^{ 2i(u-v) \operatorname{Arctg} (2\lambda)} - 1 \right|
\\&\leq \frac{2}{\pi} \, \int_{-\infty}^{\infty}  \frac{d \lambda}{1 + 4 \lambda^2} 
(|t-s| |\lambda| \wedge 2)  + \frac{4}{\pi} \, \int_{-\infty}^{\infty}  
 \frac{d \lambda}{1 + 4 \lambda^2} | (u-v) \operatorname{Arctg} (2 \lambda)  |,
\end{align*}
and, after simple estimation, this shows that
$$
\left| 1 - \mathbb{E}[(S^u(\omega))(s)(S^v(\omega))(t)] \right| 
\leq C \left[ |u-v| + |t-s| \left(1 + \log \left( 1 + \frac{1}{(t-s)^2}
  \right) \right) \right], 
$$
where $C<\infty $.  Clearly, the desired conclusion follows.
\newline
\begin{remark}
A question about filtrations comes naturally when one considers the 
group of transformations $(S^u)_{u \in \mathbb{R}}$ on the space $\mathcal{U}$. 
Indeed, for all $t, u \in \mathbb{R}$, let $\mathcal{F}_{t}^u$ be the $\sigma$-algebra generated by 
the $\mathcal{W}_{H^U}$-negligible subsets of $\mathcal{U}$ of and the variables $(S^u(\omega))(s)$, 
for $s \in (-\infty,t]$ (these variables are well-defined up to a negligible set). From the results of Jeulin and Yor, one quite easily deduces the following properties of the filtrations of the 
form $(\mathcal{F}_t^u)_{t \in \mathbb{R}}$ for $u \in \mathbb{R}$: 
\begin{itemize}
\item For all $t, u \in \mathbb{R}$, $\mathcal{F}_t^{u}$ is generated by $\mathcal{F}_t^{u+1}$ and $(S^u(\omega))(t)$.
\item For all $t, u \in \mathbb{R}$, $\mathcal{F}_t^{u+1}$ and $(S^u(\omega))(t)$ are independent under $\mathcal{W}_{H^U}$.
\item For all $t, u \in \mathbb{R}$, the decreasing intersection of $\mathcal{F}_t^{u+n}$ for $n \in \mathbb{Z}$ is trivial (i.e. it satisfies 
the zero-one law).
\item If $u \in \mathbb{R}$ is fixed, the $\sigma$-algebra generated by $\mathcal{F}_t^{u+n}$ for $t \in \mathbb{R}$ does not depend
on $n \in \mathbb{Z}$.
\end{itemize}
\noindent
All these statements concern the sequence of filtrations $(\mathcal{F}^{u+n})_{n \in \mathbb{Z}}$ for fixed $u \in \mathbb{R}$. 
A natural question arises:  how can these results be extended to the continuous family of filtrations $(\mathcal{F}^{u})_{u \in \mathbb{R}}$? 
Unfortunately, for the moment, we have no answer to this question (in particular the family does not seem to be decreasing with $u$). 
\end{remark} 
\medbreak

\end{document}